\documentclass[12pt]{amsart}
\usepackage{amssymb,amsmath,amscd,graphicx,
latexsym,amsthm}
\usepackage{amssymb,latexsym,eufrak,amsmath,amscd,graphicx}
  \usepackage[all]{xy}
\setlength{\parindent}{.3 in} \setlength{\textwidth}{6.5 in}
\setlength{\topmargin} {-.2 in} \setlength{\evensidemargin}{0 in}
\setlength{\oddsidemargin}{0 in} \setlength{\footskip}{.3 in}
\setlength{\headheight}{.3 in} \setlength{\textheight}{8.7 in}
\setlength{\parskip}{.1 in}

\theoremstyle{plain}
\newtheorem{theorem}{Theorem}[section]

\newtheorem{corollary}[theorem]{Corollary}
\newtheorem{lemma}[theorem]{Lemma}

\newtheorem{theoremalpha}{Theorem}
\newtheorem{corollaryalpha}[theoremalpha]{Corollary}

\theoremstyle{definition}

\newtheorem{remark}[theorem]{Remark}
\newtheorem{example}[theorem]{Example}
\newtheorem{question}[theorem]{Question}

\newcommand{\lra}{\longrightarrow}
\newcommand{\noi}{\noindent}

\newcommand{\NN}{\mathbf{N}}

\newcommand{\CC}{\mathbf{C}}
\newcommand{\QQ}{\mathbf{Q}}

\newcommand{\OO}{\mathcal{O}}

\newcommand{\JJ}{\mathcal{J}}

\newcommand{\frb}{\mathfrak{b}}

\newcommand{\bull}{_{\bullet}}

\newcommand{\frakm}{\mathfrak{m}}

\newcommand{\rndup}[1]{ \ulcorner {#1}
\urcorner }

\newcommand{\MI}[1]{\mathcal{J} \big ( {#1}
\big) }

\newcommand{\MMI}[2]{\MI{ {#1} \, , \, {#2}}}
\newcommand{\MMIP}[2]{\MI{ {#1} \, ; \, {#2}}}

\newcommand{\pr}{\prime}

\newcommand{\num}{ \equiv_{\text{num}} }

\newcommand{\MIJ}  {\mathcal{J}}
\newcommand{\Tor}{\textnormal{Tor}}
\newcommand{\Image}{\textnormal{Im}}
\newcommand{\LL}{\mathcal{L}}
\newcommand{\frc}{\mathfrak{c}}
\newcommand{\tn}[1]{\textnormal{#1}}

\begin{document}

\title{Syzygies of multiplier ideals on singular varieties}

\author{Robert Lazarsfeld}
\address{Department of Mathematics, University of Michigan, Ann Arbor, MI
 48109}
\email{{\tt rlaz@umich.edu}}
\thanks{Research of the first author partially supported by NSF grant DMS-0652845,}

\author{Kyungyong Lee}
\address{Department of Mathematics, University of Michigan, Ann Arbor, MI
 48109}
\email{{\tt kyungl@umich.edu}}

\author{Karen E. Smith}
\address{Department of Mathematics, University of Michigan, Ann Arbor, MI
 48109}
 \thanks{Research of the third author partially supported by NSF grant DMS-0500823.}

\email{{\tt kesmith@umich.edu}}

\dedicatory{Dedicated to Mel Hochster on the occasion of his sixty-fifth birthday}
\maketitle

\section*{Introduction}
It was recently established by the first two authors  in  \cite{LL}
that multiplier ideals  on a smooth variety satisfy some special
syzygetic properties. The purpose of this note is  to show how some
of these can be extended to the singular setting.

To set the stage, we review some of the results  from \cite{LL}. Let
$X$ be a smooth complex variety of dimension $\dim(X) = d$, and
denote by $(\OO, \frakm)$ the local ring of $X$ at a fixed point $x
\in X$.  Let   $\MIJ \subseteq \OO$ be any multiplier ideal:  i.e.
assume that   $\MIJ$ is the stalk at $x$ of a multiplier ideal sheaf
$\MI{X, \frb ^\lambda}$, where  $\frb \subseteq \OO_X$  is  an ideal
sheaf   and $\lambda$ is a positive rational number. The main result
of \cite{LL} is that if $p \ge 1$ then no minimal $p$-th syzygy of
$\MIJ$ vanishes  modulo $\frakm^{d + 1- p}$ at $x$.  In other words,
if we consider a minimal free resolution of the ideal $\MIJ$ over
the regular local ring $\OO$
\[
\xymatrix{
 \ldots \ar[r]^{u_3} &F_2 \ar[r]^{u_2}
 &   F_1 \ar[r]^{u_1} & F_0 \ar[r]   & \MIJ \ar[r] & 0
  },
\] then no minimal
generator of the $p^{\text{th}}$ syzygy module
\[ \textnormal{Syz}_p(\mathcal{J}) \ =_{\text{def}} \ \Image(u_p) \
\subseteq \ F_{p-1}\] of $\mathcal{J}$ lies in $\frakm^{d+ 1 -
p}\cdot F_{p-1}$. While this result places no restriction on the
orders of vanishing of the {\it generators\/} of $\MIJ$, it provides
strong constraints on the first and higher {\it syzygies}  of
$\MIJ$. When $d = 2$ these conditions hold for any integrally closed
ideal, but they show that in dimensions $d \ge 3$ only rather
special integrally closed ideals can arise as multiplier
ideals.\footnote{By contrast, it was established by Favre-Jonsson
\cite{FJ} and Lipman-Watanabe \cite{LW} that any integrally closed
ideal on a smooth \textit{surface} is locally a multiplier ideal.}

 Multiplier ideals can be defined on any $\QQ$-Gorenstein variety $X$, or more generally for any pair $(X, \Delta)$ consisting of an effective Weil $\QQ$-divisor $\Delta$ on a normal variety $X$ such that $K_X + \Delta$ is $\QQ$-Cartier. It is natural to wonder whether multiplier ideals in this context satisfy the same sort of algebraic properties as in the smooth case.   We will see (Example \ref{example}) that the result from \cite{LL} just quoted does not extend  without change. On the other hand,  we show that at least for \textit{first} syzygies, one gets a statement by replacing the maximal ideal $\frakm$ by any parameter ideal.
 \begin{theoremalpha} \label{TheoremA} Let $(X, \Delta)$ be a pair with $\dim X = d$,   let $(\OO, \frakm)$ be the local ring of $X$ at a Cohen-Macaulay point $x \in X$, and fix a system of parameters
 \[   z_1, \ldots , z_d \ \in \ \OO. \]  Let $\MIJ  \subseteq
 \OO$ be \tn{(}the germ at $x$ of\tn{)} any multiplier ideal on $(X, \Delta)$. Then  no minimal first  syzygy   of $\MIJ$ vanishes modulo $ (z_1, \dots, z_d)^{d} $.
 \end{theoremalpha}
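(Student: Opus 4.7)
My plan is to combine Skoda-type behavior of multiplier ideals with Koszul exactness afforded by the Cohen--Macaulay hypothesis. Fix a minimal generating set $f_1, \dots, f_n$ of $\MIJ$, and suppose $\sigma = (a_1, \dots, a_n) \in F_0$ is a first syzygy with every coefficient $a_i \in \fra^d := (z_1, \dots, z_d)^d$. The goal is to show $\sigma \in \frakm \cdot \textnormal{Syz}_1(\MIJ)$, which precludes $\sigma$ from being a minimal generator of the first syzygy module.

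First, using $\fra^d = \fra \cdot \fra^{d-1}$, I decompose each coefficient as $a_i = \sum_j z_j b_{ij}$ with $b_{ij} \in \fra^{d-1}$. Setting $\sigma_j := (b_{ij})_i \in F_0$ and $h_j := \sum_i b_{ij} f_i \in \fra^{d-1} \cdot \MIJ$, the relation $\sum a_i f_i = 0$ rearranges as $\sum_j z_j h_j = 0$. The Cohen--Macaulay hypothesis makes $z_1, \dots, z_d$ a regular sequence in $\OO$, so Koszul exactness forces the existence of antisymmetric $c_{jk} \in \OO$ (i.e. $c_{jk} = -c_{kj}$) with $h_j = \sum_k z_k c_{jk}$.

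The crux of the proof -- and the main obstacle -- is to arrange the Koszul coefficients $c_{jk}$ to lie in $\MIJ$ itself, while preserving antisymmetry. This is the step where the hypothesis that $\MIJ$ is a multiplier ideal must enter, since for a general ideal the statement of the theorem fails (as one sees from complete-intersection examples such as $(x^3, y^3, z^3)$ in three variables, whose Koszul syzygy $(y^3, -x^3, 0)$ has every entry in $\frakm^3$). The natural input is Skoda's theorem for pairs, combined with the projection-formula identity $\fra^c \cdot \MIJ = \MI{(X, \Delta),\, \fra^c \cdot \frb^\lambda}$ coming from a log resolution. Thus $\fra^{d-1} \cdot \MIJ$ is itself a multiplier ideal, and one hopes that this extra structure -- together with the Koszul exactness in $\OO$ -- suffices to produce an antisymmetric lift of the $c_{jk}$ into $\MIJ$.

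Assuming the upgrade, write $c_{jk} = \sum_i c_{jk}^{(i)} f_i$ with the lifting itself antisymmetric in $(j,k)$ (possible by averaging in characteristic zero), and set $\tau_{jk} := (c_{jk}^{(i)})_i \in F_0$. Then $\sigma_j' := \sigma_j - \sum_k z_k \tau_{jk}$ evaluates against $(f_1, \dots, f_n)$ to $h_j - \sum_k z_k c_{jk} = 0$, so $\sigma_j' \in \textnormal{Syz}_1(\MIJ)$. Expanding,
\[
\sigma \ = \ \sum_j z_j \sigma_j \ = \ \sum_j z_j \sigma_j' \ + \ \sum_{j,k} z_j z_k \tau_{jk},
\]
and the double sum vanishes by antisymmetry of $\tau_{jk}$ in $(j, k)$. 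This exhibits $\sigma = \sum_j z_j \sigma_j'$ as an element of $\frakm \cdot \textnormal{Syz}_1(\MIJ)$, completing the argument.
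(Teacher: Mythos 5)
Your overall architecture is sound and, once unwound, is essentially the paper's argument run by hand instead of in the language of $\Tor$: the paper shows that the map $\Tor_1\big(\frc^{d-1}\MIJ,\OO/\frc\big)\to\Tor_1\big(\MIJ,\OO/\frc\big)$ is zero (where $\frc=(z_1,\dots,z_d)$) and that a minimal syzygy vanishing modulo $\frc^d$ would produce a nonzero class in the image of that map; your decomposition $a_i=\sum_j z_jb_{ij}$, the cycle $(h_1,\dots,h_d)$, and the final antisymmetry cancellation are exactly that computation. The problem is that you leave the one step where the multiplier-ideal hypothesis does any work as an explicit hope ("one hopes\dots", "Assuming the upgrade\dots"), so as written the proof has a genuine gap — and, as your own example $(x^3,y^3,z^3)$ shows, that step is not a formality.

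The gap is filled by the exactness of the Skoda complex (Theorem \ref{Skoda.Cxs.Exact}), but you should not reach it the way you set it up. Do not first solve $\sum_j z_jh_j=0$ over $\OO$ and then try to push arbitrary $c_{jk}\in\OO$ into $\MIJ$; there is no mechanism for such an "upgrade." Instead, note that iterating Lemma \ref{module.property} gives the \emph{inclusion} $\frc^{d-1}\cdot\MI{\frb^\lambda}\subseteq\MI{\frc^{d-1}\frb^\lambda}$ (your "projection-formula identity" $\frc^{c}\cdot\MIJ=\MI{\frc^{c}\cdot\frb^\lambda}$ is an overstatement and is false in general — only the inclusion holds, and only the inclusion is needed). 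Hence $(h_1,\dots,h_d)$ is a Koszul $1$-cycle with entries in $\MI{\frc^{d-1}\frb^\lambda}$, and exactness of
\[
\OO^{\binom{d}{2}}\otimes\MI{\frc^{d-2}\frb^\lambda}\lra\OO^{d}\otimes\MI{\frc^{d-1}\frb^\lambda}\lra\MI{\frc^{d}\frb^\lambda}\lra 0
\]
(valid here since $m=d\ge r=d$ and the $z_i$ generate $\frc$, hence a reduction of it; this in turn rests on local vanishing on a log resolution) hands you antisymmetric $c_{jk}\in\MI{\frc^{d-2}\frb^\lambda}\subseteq\MIJ$ directly. With that substitution your concluding computation goes through (the "averaging" is unnecessary — just choose lifts for $j<k$ and extend antisymmetrically), and one also needs the Cohen--Macaulay hypothesis at exactly this point, to know the $z_i$ form a regular sequence so that the Koszul/Skoda complexes compute what you want. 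So the missing idea is not new to the paper — it is its Theorem \ref{Skoda.Cxs.Exact} — but your proposal does not actually deploy it, and without it the proof is incomplete.
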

\noi If $X$ is $\QQ$-Gorenstein then one can take $\Delta = 0$, so
that one is dealing with usual multiplier ideals of the form $\MI{X,
\frb^\lambda}$. Of course, the strongest statement is achieved by
taking $z_1, \dots, z_d$ to  generate the largest possible ideal,
which is to say,  by taking the $z_i$  to  generate a  reduction of
$\frakm$. In this case, if $x$ is a smooth point, then the $z_i$
generate the maximal ideal itself, and we recover the original
result from \cite{LL} in the case $p =1$.

Observe that while Theorem A   doesn't  give a uniform bound  on the
order of vanishing of syzygies of a multiplier ideal, it does
uniformly bound the highest  power of any ideal generated by a {\it
system of parameters \/} that can contain a syzygy. It also  yields
uniform statements provided that one brings the multiplier ideal of
the trivial line bundle into the picture. For example:
\begin{corollaryalpha}
Let $x \in X$ be a Cohen-Macaulay point with maximal ideal $\frakm$,
and set \[ \tau \ = \  \MMIP{(X, \Delta)}{\OO_X}_x. \] If $\JJ$ is
the germ at $x$ of any multiplier ideal, then no first syzygy of
$\JJ$ vanishes modulo $\tau \cdot \frakm^{2d-1}$.
\end{corollaryalpha}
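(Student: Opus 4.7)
The plan is to reduce the Corollary to Theorem A by producing a specific system of parameters $z_1, \dots, z_d$ for which $\tau \cdot \frakm^{2d-1} \subseteq (z_1, \dots, z_d)^d$. Given such a containment, any minimal first syzygy of $\JJ$ lying in $\tau \cdot \frakm^{2d-1} \cdot F_0$ would in particular lie in $(z_1, \dots, z_d)^d \cdot F_0$, contradicting Theorem A. The $z_i$ I would choose are generators of a minimal reduction $\frq$ of $\frakm$: since $(\OO, \frakm)$ is Cohen-Macaulay of dimension $d$ with infinite residue field (we work over $\CC$), such a minimal reduction exists and is generated by exactly $d$ elements, which automatically form a system of parameters because $\frq$ is $\frakm$-primary.

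The heart of the proof is the chain of containments
\[
\tau \cdot \frakm^{2d-1} \ \subseteq \ \MMIP{(X, \Delta)}{\frakm^{2d-1}} \ = \ \MMIP{(X, \Delta)}{\frq^{2d-1}} \ \subseteq \ \frq^d.
\]
For the first inclusion I would use a common log resolution $\mu: Y \to X$ of $(X, \Delta)$ and $\frakm$ with $\frakm \cdot \OO_Y = \OO_Y(-F)$: a section of $\tau$ lifts to a section of $\OO_Y(\lceil K_Y - \mu^*(K_X + \Delta)\rceil)$, and multiplying by an element of $\frakm^{2d-1}$ contributes vanishing along $(2d-1)F$, yielding a section of $\MMIP{(X,\Delta)}{\frakm^{2d-1}}$ by the log-resolution definition. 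The middle equality uses that multiplier ideals depend only on the integral closure of the defining ideal, together with the identity $\overline{\frq^n} = \overline{\frakm^n}$, which follows from the sandwich $\frq \subseteq \frakm \subseteq \overline{\frq}$ by raising to the $n$-th power and taking integral closures on both sides. The final inclusion is an iterated application of Skoda's theorem for pairs: since $\frq$ is generated by $d$ elements, $\MMIP{(X,\Delta)}{\frq^c} = \frq \cdot \MMIP{(X,\Delta)}{\frq^{c-1}}$ for every integer $c \ge d$, and applying this $d$ consecutive times starting at $c = 2d-1$ produces $\MMIP{(X,\Delta)}{\frq^{2d-1}} = \frq^d \cdot \MMIP{(X,\Delta)}{\frq^{d-1}} \subseteq \frq^d$.

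The main thing to verify is the Skoda identity in the singular setting with boundary $\Delta$: this is a standard extension of the classical Skoda theorem obtained by running the usual Koszul-complex argument on a log resolution, but it is the one ingredient that genuinely requires going beyond the smooth case treated in \cite{LL}. Once it is in hand, the three steps of the displayed chain combine cleanly, and Theorem A applied to the system of parameters $z_1, \dots, z_d$ finishes the proof.
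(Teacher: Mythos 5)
Your proof is correct and follows essentially the same route as the paper: the paper picks $z_1,\dots,z_d$ generating a reduction of $\frakm$ and invokes Corollary \ref{Cor.BS.Type}(ii) (which packages Lemma \ref{module.property} together with the Skoda theorem for pairs) to obtain $\tau\cdot\frakm^{2d-1}\subseteq(z_1,\dots,z_d)^d$, then applies Theorem A. Your chain of containments simply unpacks that corollary, with the harmless variation that you pass from $\frakm$ to its minimal reduction $\frq$ via integral-closure invariance of multiplier ideals instead of applying Skoda to $\frc=\frakm$ with the $z_i$ as a reduction directly.
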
 \noi In particular, if $(X,\Delta)$ is Kawamata
log-terminal, then no first syzygy can vanish modulo
$\frakm^{2d-1}$.

Unlike the results for  smooth varieties in \cite{LL}, our
statements here deal only with  {\it first}  syzygies. This may be
more an artifact of our method rather than a necessary restriction:
it would be interesting to investigate this further.

Concerning the organization of the paper, we start in \S 1 with a
discussion of  Skoda's theorem in the singular setting. In \S2 we
modify the arguments from \cite{LL} to prove Theorem A. We conclude
in \S 3 with some examples and applications.

We are grateful to Craig Huneke for some valuable discussions.

\section{Skoda Complexes on Singular Varieties}

In this section, we discuss the circle of ideas surrounding Skoda's
theorem in the singular setting. This appears only briefly in
\cite{PAG}, and so we thought it would be useful to spell out some
of the details. We don't claim any essential novelty for the
material in this section.

Let $(X, \Delta)$ be a pair in the sense of \cite[9.3.55]{PAG}: this
means that $X$ is a normal variety, and $\Delta = \sum d_i D_i$ is
an effective Weil $\QQ$-divisor such that $K_X + \Delta$ is
$\QQ$-Cartier.  Fix ideals $ \frb,  \frc \subseteq \OO_X$, and let
\[ \mu : X^\pr \lra X\] be a log resolution of $(X, \Delta)$, $\frb$
and $\frc$. Then one can attach numbers
\[ a(E) \, \in \, \QQ \  \ \ , \ \ b(E) \, , \,  c(E) \, \in \, \NN \]
to each exceptional divisor  of $\mu$, as well as the proper
transforms of the divisors appearing in the support of $\Delta$ or
the zeroes of $\frb$ and $ \frc$,    characterized by the
expressions
\begin{gather*}  K_{X^\pr} \num  \mu^*(K_X + \Delta) + \sum a(E)E \\
\frb \cdot \OO_{X^\pr} \, = \, \OO_{X^\pr}\big( -\sum b(E)E\, \big)
\\ \frc \cdot \OO_{X^\pr} \, = \, \OO_{X^\pr}\big( -\sum c(E)E\,
\big).
\end{gather*}
Given a rational or real weighting coefficient $\lambda > 0$, one
then defines the multiplier ideal
\[   \MMIP{(X,\Delta)}{\frc \cdot \frb^\lambda} \ = \ \mu_*  \OO_{X^\pr} \big(  \sum (\rndup{ a(E) -c(E) - \lambda b(E)} )E \big), \]
this being independent of the resolution. When $X$ is
$\QQ$-Gorenstein one can take $\Delta = 0$, and when in addition $X$
is actually Gorenstein, one has the more familiar definition
\[  \MMI{X}{\frc \cdot \frb^\lambda} \ =  \ \mu_* \OO_{X^\pr} \big( K_{X^\pr/X} - [ C + \lambda B] \big), \]
where we write
\[  B = \sum b(E) E \ \ , \ \ C = \sum c(E) E. \]

The following lemma expresses an elementary but important property
of multiplier ideals. \begin{lemma} \label{module.property} For any
integer $m \ge 0$, there is an inclusion
\[  \frc \cdot \MMIP{(X, \Delta)}{\frc^m \cdot \frb^\lambda} \  \subseteq \ \MMIP{(X, \Delta)}{\frc^{m+1} \cdot \frb^\lambda}. \]
\end{lemma}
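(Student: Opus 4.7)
The plan is to verify the containment locally on a simultaneous log resolution, where multiplier ideals have a concrete description as pushforwards of line bundles on a smooth birational model. Since the definition of $\MMIP{(X,\Delta)}{\frc^m\cdot\frb^\lambda}$ in the excerpt is independent of the log resolution, I will fix one log resolution $\mu : X^\pr \to X$ of $(X,\Delta)$, $\frb$, and $\frc$, and use the same $\mu$ for both multiplier ideals appearing in the statement.

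With the notation of the excerpt, set
\[
F_m \ = \ \sum \rndup{a(E) - m\,c(E) - \lambda b(E)}\,E,
\]
so that $\MMIP{(X,\Delta)}{\frc^m\cdot\frb^\lambda} = \mu_*\OO_{X^\pr}(F_m)$ and similarly for $F_{m+1}$. Let $f$ be a local section of $\frc$ and $g$ a local section of $\MMIP{(X,\Delta)}{\frc^m\cdot\frb^\lambda}$. By the defining formula $\frc\cdot\OO_{X^\pr}=\OO_{X^\pr}(-C)$, the function $\mu^*f$ satisfies $\divisor(\mu^*f) \ge C$, while by definition $\divisor(\mu^*g) + F_m \ge 0$. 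Adding these two divisor inequalities gives
\[
\divisor\bigl(\mu^*(fg)\bigr) \ \ge \ C - F_m.
\]

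The key observation is now purely numerical: because each $c(E)$ is an integer, it can be pulled inside the ceiling, and one has the coefficient-wise identity
\[
c(E) + \rndup{a(E) - (m+1)c(E) - \lambda b(E)} \ = \ \rndup{a(E) - m\,c(E) - \lambda b(E)},
\]
i.e.\ $C + F_{m+1} = F_m$, equivalently $C - F_m = -F_{m+1}$. Combined with the previous inequality, this yields $\divisor(\mu^*(fg)) + F_{m+1} \ge 0$, so $fg$ is a section of $\mu_*\OO_{X^\pr}(F_{m+1}) = \MMIP{(X,\Delta)}{\frc^{m+1}\cdot\frb^\lambda}$, which proves the claim.

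This is essentially bookkeeping on the log resolution; there is no serious obstacle, and the only step worth highlighting is the integrality of the coefficients $c(E)$, which is what allows them to be absorbed into the ceiling function and makes the two multiplier ideals differ by exactly the divisor of $\frc$ pulled back to $X^\pr$. The same argument works for any normal pair and does not require $X$ to be $\QQ$-Gorenstein or Gorenstein.
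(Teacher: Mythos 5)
Your proof is correct and takes essentially the same route as the paper's: both work on a fixed simultaneous log resolution and reduce the claim to the integrality of $c(E)$, which lets it be absorbed into the round-up so that $C + F_{m+1} = F_m$. The paper merely phrases your section-by-section computation more compactly as the containment $\mu_*\OO_{X^\pr}(-C)\cdot \mu_*\OO_{X^\pr}(F_m)\subseteq \mu_*\OO_{X^\pr}(-C+F_m)=\mu_*\OO_{X^\pr}(F_{m+1})$.
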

\begin{proof}
One has
\begin{align*}
 \frc \cdot \MMIP{(X, \Delta)}{\frc^m \cdot \frb^\lambda} \ &\subseteq \ \mu_* \OO_{X^\pr}\big(-\sum c(E)E \big) \cdot  \mu_*  \OO_{X^\pr} \big(  \sum (\rndup{ a(E) -mc(E) - \lambda b(E)} )E \big)\\
 &\subseteq \ \mu_*  \OO_{X^\pr} \big(  \sum (\rndup{ a(E) -(m+1)c(E) -\lambda b(E)} )E \big) \\
 &= \ \
 \MMIP{(X, \Delta)}{\frc^{m+1} \cdot \frb^\lambda}.
 \end{align*}
\end{proof}

\begin{corollary}
With $(X, \Delta)$ as above, one has \[  \frc \cdot
\MMIP{(X,\Delta}{\OO_X} \ \subseteq \ \MMIP{(X,\Delta)}{\frc} \] for
any ideal $\frc$. In particular, if $(X, \Delta)$ is KLT, then
\[ \frc \ \subseteq \ \MMIP{(X, \Delta)}{\frc}. \qed \]
\end{corollary}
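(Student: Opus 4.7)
The corollary is essentially an immediate specialization of Lemma \ref{module.property}, so my plan is just to identify the right substitution and then unpack what ``KLT'' contributes.

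For the first inclusion, the plan is to apply Lemma \ref{module.property} with $m=0$ and with the auxiliary ideal $\frb$ taken to be the trivial ideal $\OO_X$ (equivalently, $\lambda = 0$). Then $\frc^{m} \cdot \frb^\lambda = \OO_X$ on the left, $\frc^{m+1} \cdot \frb^\lambda = \frc$ on the right, and the lemma reads
\[
   \frc \cdot \MMIP{(X,\Delta)}{\OO_X} \ \subseteq \ \MMIP{(X,\Delta)}{\frc},
\]
which is exactly the desired inclusion. (I would want to double-check that the definitions given in this section make sense when $\frb = \OO_X$ — this is routine, since in the log resolution all the $b(E)$ are simply zero, so the factor disappears from the formula — but there is nothing to do beyond this.)

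For the ``in particular'' assertion, I would invoke the definition of Kawamata log terminal: $(X,\Delta)$ being KLT means precisely that the rounded-up divisor $\sum \rndup{a(E)}E$ on the log resolution has no summand with $\rndup{a(E)} \le 0$, equivalently $\MMIP{(X,\Delta)}{\OO_X} = \OO_X$. Substituting this into the first inclusion yields $\frc = \frc \cdot \OO_X \subseteq \MMIP{(X,\Delta)}{\frc}$.

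The only conceivable obstacle is a notational one — verifying that the lemma as stated really covers the case $\frb = \OO_X$ (or $\lambda = 0$) without some edge-case issue in the definition of multiplier ideals on pairs — but since the lemma's proof is purely a manipulation of round-ups in the resolution and the $b(E)$ factor drops out cleanly, there is nothing substantive to prove beyond this citation.
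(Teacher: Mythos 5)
Your proposal is correct and is exactly the argument the paper intends: the corollary is stated with an immediate \qed precisely because it is the case $m=0$, $\frb^\lambda$ trivial, of Lemma \ref{module.property}, combined with the standard fact that $(X,\Delta)$ is KLT if and only if $\MMIP{(X,\Delta)}{\OO_X}=\OO_X$. One small slip worth fixing: KLT means all discrepancies satisfy $a(E)>-1$, i.e.\ $\rndup{a(E)}\ge 0$ for every $E$ (you wrote $\rndup{a(E)}\le 0$ is excluded, which would instead require $a(E)>0$), but since you actually invoke only the equivalence with $\MMIP{(X,\Delta)}{\OO_X}=\OO_X$, the argument is unaffected.
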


We now turn to Skoda complexes. Our interest being local, we will
assume for simplicity of notation that  $X$ is affine. Chose
elements
\[  f_1, \ldots, f_r  \,  \in \, \frc, \]
and for compactness write \[ \MI{ \frc^m \cdot \frb^\lambda}  =
\MMIP{(X, \Delta)}{\frc^m \cdot \frb^\lambda}. \] It follows from
the previous corollary that each  $f_i $ multiplies $\MI{\frc^\ell
\cdot \frb^\lambda} $ into $\MI{\frc^{\ell+1} \cdot \frb^\lambda} $.
Therefore the $f_i$ determine a complex $\textnormal{Skod}\bull(m;
f)$
\begin{equation*}
\xymatrix{ \ldots \ar[r]& \OO_X^{\binom{r}{2}} \otimes \MI{
\frc^{m-2} \cdot \frb^\lambda} \ar[r] & \OO_X^r \otimes \MI{
\frc^{m-1} \cdot \frb^\lambda} \ar[r] & \MI{ \frc^m \cdot
\frb^\lambda} \ar[r] & 0}
\end{equation*}
 arising as a subcomplex of the Koszul complex $K\bull(f_1, \ldots, f_r) = \textnormal{Kosz}\bull(f)$ on the $f_i$.

The basic result for our purposes is
\begin{theorem} \label{Skoda.Cxs.Exact}
Assume that $m \ge r$ and that the $f_i$ generate a reduction of
$\frc$. Then $\textnormal{Skod}\bull(m; f)$ is exact.
\end{theorem}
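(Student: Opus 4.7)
The plan is to realize $\tn{Skod}\bull(m;f)$ as the direct image under $\mu_*$ of an exact Koszul-type complex on a log resolution, invoking local vanishing to guarantee that the pushforward remains exact.

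First I would take a log resolution $\mu \colon X' \to X$ of $(X,\Delta,\frb,\frc)$ as in the setup, and set $L = \OO_{X'}(-C)$, with $C$ the divisor of $\frc \cdot \OO_{X'}$. The hypothesis that $f_1,\dots,f_r$ generate a reduction of $\frc$ pulls back to the statement $(f_1,\dots,f_r)\cdot \OO_{X'} = \frc \cdot \OO_{X'} = L$ on $X'$: picking $N$ with $\frc^{N+1} = (f_1,\dots,f_r)\frc^N$ and pulling back to $X'$ (where $\frc \cdot \OO_{X'}$ is invertible) identifies the two ideals. Equivalently, $\sigma := (f_1,\dots,f_r)$ is a nowhere-vanishing section of the rank-$r$ bundle $V \otimes L$, with $V = \CC^r$, and so its Koszul complex is an exact sequence of locally free sheaves
\[
0 \to \wedge^r V^\vee \otimes L^{-r} \to \cdots \to V^\vee \otimes L^{-1} \to \OO_{X'} \to 0,
\]
with differential given by contraction against $\sigma$.

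Next I would twist this complex by $\OO_{X'}\big(\lceil A - \lambda B - mC\rceil\big)$, where $A = K_{X'} - \mu^*(K_X + \Delta)$. Since $pC$ is integral, the $p$-th term of the twist is $\wedge^p V^\vee \otimes \OO_{X'}\big(\lceil A - \lambda B - (m-p)C\rceil\big)$, and the hypothesis $m \ge r$ ensures that every exponent $k := m-p$ is non-negative. By definition of the multiplier ideal, $\mu_* \OO_{X'}(\lceil A - \lambda B - kC\rceil) = \MI{\frc^k \cdot \frb^\lambda}$, and the Koszul differential descends on the base to multiplication by the $f_i$. Thus $\mu_*$ applied term-by-term gives exactly $\tn{Skod}\bull(m;f)$.

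The crux is that $\mu_*$ actually preserves exactness here. I would invoke local vanishing: since $\lambda B + kC$ is an effective $\QQ$-divisor with simple normal crossings support on $X'$, the relative Kawamata--Viehweg theorem applied to the pair $(X,\Delta)$ yields
\[
R^i \mu_* \OO_{X'}\big(\lceil A - \lambda B - kC \rceil\big) = 0 \quad \text{for } i \ge 1 \text{ and all } k \ge 0.
\]
Breaking the exact complex on $X'$ into short exact sequences and running the long exact sequences of $R\mu_*$ then produces, inductively, the exactness of the direct image on $X$. The principal obstacle I would expect is simply verifying that local vanishing applies in this generality — a pair $(X,\Delta)$ with $X$ only normal and $\Delta$ possibly having non-integer coefficients — uniformly across each twist $-\lambda B - kC$. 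The reduction hypothesis plays its role only in yielding the surjection $\OO_{X'}^r \twoheadrightarrow L$; once this global non-vanishing of $\sigma$ is secured, the Koszul-plus-vanishing machinery carries the rest.
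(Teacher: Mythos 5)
Your proposal is correct and follows essentially the same route as the paper's own sketch: pull back to a log resolution, use the reduction hypothesis to see that the $f_i$ generate the invertible sheaf $\OO_{X^\pr}(-C)$ and hence give an exact Koszul complex of locally free sheaves, twist by $\OO_{X^\pr}\big(\rndup{A - \lambda B - mC}\big)$, and push forward using local vanishing ($R^i\mu_*=0$ for $i\ge 1$ on each term) to conclude exactness of $\textnormal{Skod}\bull(m;f)$. The paper cites \cite[9.4.17]{PAG} for exactly the local vanishing statement in the generality of pairs that you flag as the remaining point to check.
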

\noi Recall that the hypothesis on the $f_i$ is equivalent to asking
that their pull-backs to the log resolution $X^\pr$ generate the
pullback $\OO_{X^\pr}(-C)$ of $\frc$.

\begin{proof} [Sketch of proof] The pull-backs of the given elements $f_i \in \frc$ determine an exact Koszul complex of vector bundles on $X^\pr$:
\begin{equation*}
\xymatrix{ \ldots \ar[r]& \OO_{X^\pr}^{\binom{r}{2}} \otimes
\OO_{X^\pr} (2C) \ar[r] & \OO_{X^\pr}^r \otimes \OO_{X^\pr} (C)
\ar[r] & \OO_{X^\pr} \ar[r] & 0} .
\end{equation*}
Twisting through by $ \OO_{X^\pr} \big(  \sum (\rndup{ a(E) -mc(E) -
\lambda b(E)} )E \big)$, we get an exact sequence all of whose terms
have vanishing higher direct images thanks to the local vanishing
theorems for multiplier ideals \cite[9.4.17]{PAG}. The direct image
of this twisted Koszul complex -- which is the Skoda complex
$\textnormal{Skod}\bull(m; f)$ -- is therefore exact.
\end{proof}

We conclude this section by recording some consequences of Brian\c
con-Skoda type.
\begin{corollary} \label{Cor.BS.Type}
Assume as in the theorem that $m \ge r$ and that $f_1, \ldots, f_r$
generate a reduction of $\frc$. Then
\begin{gather}
\MMIP{(X, \Delta)}{\frc^m \cdot \frb^\lambda} \ = \ (f_1, \ldots, f_r) \cdot \MMIP{(X, \Delta)}{\frc^{m-1} \cdot \frb^\lambda} \tag{i}; \\
\frc^m \cdot \MMIP{(X,\Delta)}{\OO_X} \ \subseteq \ (f_1, \ldots,
f_r)^{m+1 -r} \tag{ii}
\end{gather}
In particular, if $(X, \Delta)$ has only log terminal singularities,
then
\[ \frc^m \ \subseteq \ (f_1, \ldots, f_r)^{m+1 -r}.\]
\end{corollary}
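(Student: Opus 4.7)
The approach is to extract both conclusions directly from Theorem \ref{Skoda.Cxs.Exact} on the exactness of the Skoda complex.

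For part (i): I would read off the rightmost portion of the exact complex $\textnormal{Skod}\bull(m;f)$, namely the exact sequence
\[
\OO_X^r \otimes \MMIP{(X,\Delta)}{\frc^{m-1}\cdot\frb^\lambda}\ \xrightarrow{\,(g_1,\ldots,g_r)\,\mapsto\,\sum f_i g_i\,}\ \MMIP{(X,\Delta)}{\frc^m\cdot\frb^\lambda}\ \to\ 0.
\]
Since the image of the displayed map is precisely the submodule $(f_1,\ldots,f_r)\cdot \MMIP{(X,\Delta)}{\frc^{m-1}\cdot\frb^\lambda}$, surjectivity gives the inclusion $\MMIP{(X,\Delta)}{\frc^m\cdot\frb^\lambda}\subseteq (f_1,\ldots,f_r)\cdot \MMIP{(X,\Delta)}{\frc^{m-1}\cdot\frb^\lambda}$. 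The reverse inclusion is Lemma \ref{module.property} applied with exponent $m-1$.

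For part (ii): the plan is to chain Lemma \ref{module.property} with part (i), taking $\frb=\OO_X$ (equivalently $\lambda=0$) throughout. First, iterating Lemma \ref{module.property} $m$ times gives
\[
\frc^m\cdot \MMIP{(X,\Delta)}{\OO_X}\ \subseteq\ \MMIP{(X,\Delta)}{\frc^m}.
\]
Next, since (i) requires exponent $\ge r$, I can apply it repeatedly so long as the current $\frc$-exponent remains $\ge r$; telescoping $m-r+1$ such applications yields
\[
\MMIP{(X,\Delta)}{\frc^m}\ =\ (f_1,\ldots,f_r)^{m-r+1}\cdot \MMIP{(X,\Delta)}{\frc^{r-1}}.
\]
Because raising the coefficient of $\frc$ only shrinks the associated multiplier ideal, $\MMIP{(X,\Delta)}{\frc^{r-1}}\subseteq \MMIP{(X,\Delta)}{\OO_X}$. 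Combining these three containments gives (ii); in the KLT case $\MMIP{(X,\Delta)}{\OO_X}=\OO_X$, so the chain collapses to the Brian\c con--Skoda inclusion $\frc^m\subseteq (f_1,\ldots,f_r)^{m+1-r}$.

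The main obstacle is really just bookkeeping: one must verify that the iteration of (i) is legitimate at every step, i.e.\ that the hypothesis \emph{exponent of $\frc$ is $\ge r$} survives each descent. Each application lowers that exponent by one, and the process terminates precisely at exponent $r-1$ after exactly $m-r+1$ steps, matching the power of $(f_1,\ldots,f_r)$ appearing on the right of (ii); no further geometric input is required beyond Theorem \ref{Skoda.Cxs.Exact}.
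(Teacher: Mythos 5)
Your proposal is correct and follows essentially the same route as the paper: (i) is exactly the surjectivity of the last map of the exact Skoda complex (with the reverse inclusion coming from Lemma \ref{module.property}), and (ii) is obtained by combining Lemma \ref{module.property} with the telescoped form $\MMIP{(X,\Delta)}{\frc^m} = (f_1,\ldots,f_r)^{m+1-r}\cdot\MMIP{(X,\Delta)}{\frc^{r-1}}$ of (i), then using that the remaining multiplier ideal sits inside $\OO_X$. Your bookkeeping of the $m-r+1$ descents is exactly the induction the paper leaves implicit.
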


\begin{proof}
The first statement follows from the surjectivity of the last map in
the Skoda complex, and it implies inductively that $ \JJ_m \ = \
(f_1, \ldots, f_r)^{m+1-r}\JJ_{r-1}$. This being said, for (ii) one
uses Lemma \ref{module.property} to conclude
\begin{align*}
\frc^m \cdot \MMIP{(X, \Delta)}{\OO_X} \ &\subseteq \ \MMIP{(X, \Delta)}{\frc^m} \\
&= \ (f_1, \ldots, f_r)^{m+1-r} \cdot \MMIP{(X, \Delta)}{\frc^{r-1}}\\
&\subseteq \ (f_1, \ldots, f_r)^{m+1-r}.
\end{align*}
The last statement follows from (ii) since  $\MMIP{(X,
\Delta)}{\OO_X} = \OO_X$ when   $(X, \Delta)$ has log-terminal
singularities. \end{proof}
\begin{remark} \label{Lipman.Teissier.BS}
The inclusion $\frc^m \subseteq (f_1, \ldots, f_r)^{m+1-r}$ in the
last statement of the corollary holds more generally on any variety
with only rational singularities thanks to Lipman and Tessier's form
of  the Briancon-Skoda theorem  \cite[Thm 2.1]{LT}.

\end{remark}

\section{Proof of Theorem A}

We now refine the arguments of \cite{LL} to prove Theorem A.

  As in the statement, let $(\OO, \frakm)$ be the local ring of $X$ at the Cohen-Macaulay point $x \in X$. Let \[ \frc \ =\  (z_1, \ldots, z_d) \subseteq \OO\]  denote the ideal generated by the given system of parameters, and write
  \[ \JJ(\frc^m\cdot \frb^\lambda)  \, = \,\MMIP{(X,\Delta)}{\frc^m \cdot\frb^\lambda}_x \ \subseteq \ \OO \] for the germ at $x$ of the indicated multiplier ideal.

  We claim to begin with  that the map
  \begin{equation} \Tor_1\big( \frc^{d-1} \cdot \JJ , \OO / \frc \big) \lra    \Tor_1\big(  \JJ , \OO / \frc \big)\tag{*} \end{equation}
  vanishes. This follows by observing that \cite[Theorem B]{LL} remains valid in our setting, but it is more instructive to write out the argument explicitly. In fact, since $(\OO, \frakm)$ is Cohen-Macaulay and $\frc$ is generated by a regular sequence, we may compute the Tor's in question via the Koszul complex $K\bull(z_1, \ldots, z_d)$ associated to $z_1, \ldots, z_d$. This being said, consider the commutative diagram:
\begin{equation*}
\xymatrix@C-0pt{
  \OO^{\binom{d}{2}} \otimes \frc^{d-1} \MI{\frb^\lambda}
\ar[r] \ar@{^{(}->}[d] & \OO^{d} \otimes \frc^{d-1}
\MI{\frb^\lambda} \ar[r]\ar@{^{(}->}[d]  &  \OO \otimes \frc^{d}
\MI{\frb^\lambda}  \ar@{^{(}->}[d]
\\
  \OO^{\binom{d}{2}} \otimes \MI{\frc^{d-2} \frb^\lambda}
\ar[r]\ar@{^{(}->}[d]  & \OO^{d} \otimes \MI{\frc^{d-1}
\frb^\lambda} \ar[r] \ar@{^{(}->}[d]  &\OO  \otimes \MI{\frc^{d}
\frb^\lambda}
  \ar@{^{(}->}[d]
  \\   \OO^{\binom{d}{2}} \otimes  \MI{\frb^\lambda}
\ar[r] & \OO^{d} \otimes  \MI{\frb^\lambda} \ar[r] & \OO  \otimes
\MI{\frb^\lambda}.
  }
\end{equation*}
   Here the top and bottom rows arise from the Koszul complex (except that we have harmlessly modified the upper term on the right), and the middle row is part of the Skoda complex. The inclusion of the top into the middle row comes from Lemma \ref{module.property}.

   The groups in (*) are computed respectively as the homology of the first and third rows in the diagram, with the map arising from the inclusion of the one in the other. On the other hand, the middle row of the diagram is exact thanks to Theorem \ref{Skoda.Cxs.Exact}. Hence the map in (*) is zero, as required.

Folllowing the idea of \cite[Proposition 2.1]{LL}, we now deduce
Theorem A from (*).  Let  $ \JJ = \MI{\frb^\lambda}  \subseteq \OO$,
and consider a minimal free resolution $F\bull$ of $\MIJ$:
\begin{equation} \label{resoln.eqn}
\xymatrix{
 \ldots \ar[r]^{u_3} & F_2 \ar[r]^{u_2} &   F_1 \ar[r]^{u_1} & F_0
\ar[r]^{\pi}   & \MIJ \ar[r] & 0
  },
\end{equation}
where $F_i = \OO^{b_i}$. Assume for a contradiction that the
statement of the Theorem fails. Then there is a minimal generator $e
\in F_1$ such that $ u_1(e) \in (z_1, \dots, z_d)^{d} F_{0}$. In
particular, $e$ lies in the kernel of the induced map
$$
\xymatrix{
  F_1  \otimes \OO/\frc \ar[r]^{u_1 \otimes 1} &   F_{0}  \otimes \OO/\frc
    },
$$
and so represents a  class \[  \overline e \ \in \
 \Tor_1(\MIJ,  \OO/ \frc \big) \, =
 \, H_1 \big( F\bull \otimes  \OO/ \frc \big). \]
 Furthermore, since $e$ is {\it minimal} generator of $F_1$,  one has $e \not \in \frakm F_1$ and hence $e \not \in \text{im}(  u_2)$;  this ensures that the class $\overline e$ it represents in Tor is non-zero.  To complete the proof of Theorem A, we will show that $\overline{e}$ lies in the image
 of the natural  map
 \begin{equation}
\overline{e} \ \in \ \text{im} \Big( \Tor_1(\frc^{d-1}\MIJ,
\OO/\frc) \lra \Tor_1(\MIJ, \OO/\frc) \Big), \tag{**}
\end{equation}
which contradicts (*).

For (**), the plan is to explicate the representation of $e$ as a
Koszul cohomology class. To this end, let $h_1, \ldots, h_r$ be
minimal generators of $\JJ$, and let $g_1, \ldots, g_r \in \frakm$
be the coefficients of the minimal syzygy represented by $e \in
F_1$, so that $\sum g_ih_i = 0$. By assumption,
\[ g_i \ \in \ \frc^d \, = \, (z_1, \ldots, z_d)^d. \]
Now write
$$
g_i = z_1 g_{i1} + \dots +  z_d g_{id}
$$
where each $g_{ij} \in (z_1, \dots, z_d)^{d-1},$ and for  $j = 1,
\dots, d$   put
\[ G_j \ = \  h_1g_{1j} + \dots + h_{r}g_{rj}. \]
Then $G_j \in (z_1, \dots, z_d)^{d-1}\MIJ$. Furthermore,
    the $G_{j}$ give a Koszul relation on the $z_j$, i.e.
$$
z_1 G_1 + \dots + z_d G_d = 0,
$$
and so they represent a first cohomology class of the complex
 $(z_1, \dots,z_d)^{d-1}\JJ \otimes K_{\bullet}(z_1, \dots, z_d)$, where  as above $K_{\bullet}(z_1, \dots, z_d)$ denotes the Koszul complex on the $z_j$. In other words, $(G_1, \dots, G_d)$ represents an element
 \[ \eta \ \in  \ \Tor_1(\frc^{d-1}\MIJ,
{\OO}/{\frc}).\] It is not hard to check that the image of $\eta$
under the natural map to $\Tor_1(\MIJ, {\OO/}{\frc})$ is precisely
the  class $\overline e$. In other words, $\overline e$ lies in the
image of the map in (**), as required.

\begin{remark}\label{rem}
The "lifting" argument of \cite[Prop 1.1]{LL} can not be carried out
in the singular case for $p^{\text{th}}$ syzygies when $p \geq 2$
because the entries of the matrices defining the maps in the minimal
free resolution of $J$ can only be assumed in the maximal ideal
$\frakm$, not in $(z_1, \dots, z_d)$. However, if we happen to know
that an ideal $J$ has a minimal free resolution in which the entries
of the matrices describing all the maps $\mu_i$ for $i < p$ lie in
the ideal $(z_1, \dots, z_d)$, then we can carry out the same
"zig-zag" argument as in \cite[Prop 1.1]{LL} to deduce that no
minimal $p$-th syzygy of $J$ is in $(z_1, \dots, z_d)^{d+1 - p}$.
This will be the case, for example, for ideals $J$ that are
generated by a regular sequence of elements vanishing to high order
at $x$.
\end{remark}

\section{Corollaries and Examples}

We start with an example to show that the results of \cite{LL} do
not extend without change to the singular case.

\begin{example}\label{example}
Let $\OO$ be the local ring at the origin  of the hypersurface  in
$\CC^3$ defined by the equation \[ x^n + y^n + z^n  \ = \ 0 \] where
$n \ge3 $.  By blowing up the singular point, we get a log
resolution and it is easy to compute that the multiplier ideal of
the trivial ideal is precisely $\tau = (x, y, z)^{n-2}$.  This ideal
has a minimal syzygy vanishing to  order two:  the elements  $x^2,
y^2, $ and $z^2$ give a minimal syzygy on the (subset of the)
minimal generators $x^{n-2}, y^{n-2}, z^{n-2}$ of $\tau$. This shows
that in the singular case, Theorem A of \cite{LL}  does not hold as
stated, which would rule out a minimal syzygy vanishing modulo $(x,
y, z)^2$.  On the other hand, as our Theorem A predicts, this syzygy
does not vanish modulo the square of the ideal generated by two of
the coordinate functions. \qed
\end{example}

We next use results of Brian\c con-Skoda type to give statements
involving the multiplier ideal of $\OO_X$. Given a normal complex
variety $X$ of dimension $d$, define an ideal $\sigma(X) \subseteq
\OO_X$ by setting
\begin{equation}
\sigma(X) \ = \ \sum_{\Delta} \MMIP{(X, \Delta)} {\OO_X},
\end{equation}
the sum being taken over all effective $\QQ$-divisors $\Delta$ such
that $K_X + \Delta$ is $\QQ$-Cartier.

\begin{corollary} \label{Cor.With.Sigma}
If $x \in X$ is a Cohen-Macaulay point, and
\[ \JJ \, = \, \MMIP{(X, \Delta)}{\frb^\lambda}_x \ \subseteq \ \OO \,  =\,  \OO_{x,X} \]
 is the germ at $x$ of any multiplier ideal, then no minimal first syzygy of $\JJ$ can vanish modulo $\sigma(X) \cdot \frakm^{2d - 1}$.  I.e. if $g_1, \ldots, g_r$ are the coefficients of a minimal syzygy on minimal generators $h_1, \ldots, h_r \in \JJ$, then
 \[  g_i \ \not \in \ \sigma(X) \cdot \frakm^{2d-1}
 \] for at least one index $i$.
\end{corollary}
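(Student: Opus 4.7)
The plan is to reduce directly to Theorem A via the Briançon--Skoda statement of Corollary \ref{Cor.BS.Type}(ii). Since $X$ is a variety over $\CC$, the residue field at $x$ is infinite, so the $\frakm$-primary ideal $\frakm$ admits a minimal reduction generated by exactly $d = \ell(\frakm)$ elements; fix such a reduction $(z_1,\dots,z_d) \subseteq \frakm$. Because $x$ is Cohen--Macaulay, these $z_i$ form a system of parameters of precisely the kind to which Theorem A applies.

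Next, for each effective Weil $\QQ$-divisor $\Delta$ with $K_X + \Delta$ being $\QQ$-Cartier, I apply Corollary \ref{Cor.BS.Type}(ii) with $\frc = \frakm$, with $f_i = z_i$, with $r = d$, and with $m = 2d-1$. The hypothesis $m \ge r$ holds (assuming $d \ge 1$) and the $z_i$ generate a reduction of $\frakm$, so the corollary delivers
\[
\frakm^{2d-1} \cdot \MMIP{(X,\Delta)}{\OO_X} \ \subseteq \ (z_1,\dots,z_d)^{(2d-1)+1-d} \ = \ (z_1,\dots,z_d)^{d}.
\]
Summing these containments over all admissible $\Delta$ yields
\[
\frakm^{2d-1} \cdot \sigma(X) \ \subseteq \ (z_1,\dots,z_d)^{d}.
\]

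To conclude, I argue by contradiction. Suppose that $g_i \in \sigma(X) \cdot \frakm^{2d-1}$ for every index $i$. The inclusion above forces each $g_i$ to lie in $(z_1,\dots,z_d)^{d}$, so the minimal first syzygy $(g_1,\dots,g_r)$ of $\JJ$ vanishes modulo $(z_1,\dots,z_d)^{d}$. Applying Theorem A to the multiplier ideal $\JJ$ with this system of parameters rules this out, giving the desired contradiction.

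I do not anticipate any substantive obstacle: the argument is essentially a bookkeeping combination of the two principal results already established in the paper. The only ingredient external to the paper is the existence of a $d$-generated reduction of $\frakm$, which is a standard consequence of the analytic spread of an $\frakm$-primary ideal in a local ring with infinite residue field being equal to the Krull dimension.
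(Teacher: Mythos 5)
Your argument is correct and is essentially identical to the paper's own proof: choose a minimal reduction $(z_1,\dots,z_d)$ of $\frakm$, use Corollary \ref{Cor.BS.Type}(ii) with $\frc=\frakm$, $m=2d-1$ to get $\sigma(X)\cdot\frakm^{2d-1}\subseteq(z_1,\dots,z_d)^d$, and conclude by Theorem A. The extra details you supply (infinite residue field giving a $d$-generated reduction, and summing the containments over all admissible $\Delta$) are exactly the steps the paper leaves implicit.
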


\begin{corollary} \label{Admissible.SIng.Cor}
If $X$ supports a $\QQ$-divisor $\Delta_0$ such that $(X, \Delta_0)$
is KLT, then no first syzygy of any multiplier ideal $\JJ$ can
vanish modulo $\frakm^{2d-1}$. $\qed$
\end{corollary}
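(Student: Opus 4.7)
The plan is to derive this as an immediate consequence of Corollary~\ref{Cor.With.Sigma}. The crucial observation is that when $(X, \Delta_0)$ is KLT for some auxiliary boundary $\Delta_0$, the multiplier ideal $\MMIP{(X, \Delta_0)}{\OO_X}$ equals $\OO_X$: on a log resolution, every discrepancy coefficient $a(E)$ appearing in $K_{X^\pr} \num \mu^*(K_X + \Delta_0) + \sum a(E) E$ satisfies $a(E) > -1$, so each $\rndup{a(E)} \ge 0$ and the pushforward is all of $\OO_X$. Consequently the ideal
\[
\sigma(X) \ = \ \sum_{\Delta} \MMIP{(X, \Delta)}{\OO_X}
\]
contains the unit ideal, and hence equals $\OO_X$.

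The KLT hypothesis also forces $X$ to have log terminal, hence rational, and therefore Cohen--Macaulay, singularities, so Corollary~\ref{Cor.With.Sigma} applies at every point $x \in X$. Substituting $\sigma(X) = \OO_X$ into its conclusion yields $\sigma(X) \cdot \frakm^{2d-1} = \frakm^{2d-1}$, which is precisely the statement that no minimal first syzygy of $\JJ$ can vanish modulo $\frakm^{2d-1}$. No serious technical obstacle is expected here: the whole argument amounts to the substitution $\sigma(X) = \OO_X$, with the KLT assumption used only to produce a single admissible $\Delta_0$ whose multiplier ideal already saturates the sum defining $\sigma(X)$.
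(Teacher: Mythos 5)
Your proposal is correct and is exactly the argument the paper intends (the corollary is stated with an immediate $\qed$ precisely because it follows from Corollary~\ref{Cor.With.Sigma} by the substitution $\sigma(X)=\OO_X$, which holds since the KLT hypothesis forces $\MMIP{(X,\Delta_0)}{\OO_X}=\OO_X$). Your additional remark that KLT implies rational, hence Cohen--Macaulay, singularities is a welcome bit of care, since it shows the Cohen--Macaulay hypothesis of Corollary~\ref{Cor.With.Sigma} is automatic here.
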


\begin{proof}
[Proof of Corollary \ref{Cor.With.Sigma}] Let $z_1, \ldots, , z_d$
be a system of parameters at $x$ generating a reduction of the
maximal ideal $\frakm \subseteq \OO$. It follows from  Corollary
\ref{Cor.BS.Type} (ii) that
\[  \sigma(X) \cdot \frakm^{2d-1} \ \subseteq \ (z_1, \ldots, z_d)^d. \]
The assertion then follows from Theorem A.
\end{proof}

\begin{remark} Using the result of Lipman-Teissier \cite{LT} quoted in Remark \ref{Lipman.Teissier.BS}, a similar argument shows that the conclusion of Corollary \ref{Admissible.SIng.Cor} holds at any Cohen-Macaulay point of a $\QQ$-Gorenstein variety with only rational singularities. \qed
\end{remark}

\begin{example}\label{ex2}
Let $R$ be the local ring at the vertex of the affine cone over  a
smooth projective hypersurface of degree $n$ in projective $d$
space. Then $R$ is a $d$-dimensional Gorenstein ring
 with multiplier ideal $\tau = \frakm^{n-d}$. According to Corollary  \ref{Cor.With.Sigma}, no minimal syzygy of any multiplier ideal can vanish to order $n-d + (2d -1) = n + d -1 $.
   Note that since every ideal is contained in the unit  ideal, every multiplier ideal is contained in the multiplier ideal $\tau = \frakm^{n-d}$ of the trivial ideal.
 \end{example}

\begin{remark}
If $X$ is $\QQ$-Gorenstein, then $\sigma(X) = \MI{X,\OO_X}$, since
in this case one can take $\Delta = 0$ in the sum defining
$\sigma(X)$. It is known in this setting that $\MI{X,\OO_X}$ reduces
modulo $p \gg 0$ to the test ideal $\tau(X)$ of $X$ defined using
tight closure (see \cite{Ha2}, \cite{Sm2}).  It would be interesting
to know whether there is an analogous interpretation of the ideal
$\sigma(X)$ on an arbitrary normal variety $X$. In this connection,
observe  from Corollary \ref{Cor.BS.Type}  that if $f_1, \ldots, f_r
\in \frakm$ are functions generating a reduction of an ideal $\frc$,
then
\[  \sigma(X) \cdot \frc^m \ \subseteq \ (f_1, \ldots, f_r)^{m+1 -r}; \]
in characteristic $p > 0$ the analogous formula holds with
$\sigma(X)$ replaced by $\tau(X)$.  \qed
\end{remark}

Our remaining applications make more systematic use of the
connection with tight closure alluded to above.  Let $X$ be a
$\QQ$-Gorenstein variety of dimension $d$, $x \in X$ a
Cohen-Macaulay point, and set
\[  \tau \ = \ \MI{X, \OO_X}_x \ \subseteq \ \OO = \OO_{x,X}. \]
In the $\QQ$-Gorenstein setting, Corollary \ref{Cor.With.Sigma}
 asserts that no minimal first syzygy of a multiplier ideal can vanish modulo $\tau \cdot \frakm^{2d-1}$.
On the other hand, according to \cite[Thm 3.1]{Sm2} or \cite{Ha2},
the ideal $\tau$  is  a {\it universal test ideal}  for $\OO$ in the
sense of tight closure. Roughly speaking, this means that, after
reducing modulo $p$ for $p \gg 0$, the ideal  $\tau$ becomes the
test ideal for the corresponding ring $\OO$ modulo $p$, which is to
say, the elements of $\tau$  multiply  the tight closure of any
ideal $I$ back into the ideal $I$. For precise statements we refer
to the main theorems of either  \cite{Sm2} or \cite{Ha2}.
 Using this, we can deduce some statements in characteristic zero by reducing mod $p$ and invoking facts from tight closure.

 For example:

\begin{corollary}\label{cor3}
Assuming that $X$ is $\QQ$-Gorenstein, let $J \subset \OO$ denote
the Jacobian ideal of $\OO$ with respect to some local embedding in
a smooth variety. If $f = (f_1, \dots, f_r) $ is a minimal (first)
syzygy of some multiplier ideal  $\MIJ$, then  some
 $f_i$ fails to be in the ideal  $J \frakm^{2d-1}$.
\end{corollary}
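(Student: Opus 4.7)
The plan is to reduce the statement to Corollary \ref{Cor.With.Sigma} by establishing the inclusion $J \subseteq \tau$. Since $X$ is $\QQ$-Gorenstein we may take $\Delta = 0$ in the sum defining $\sigma(X)$, so that $\sigma(X)_x = \MI{X,\OO_X}_x = \tau$. Given this, Corollary \ref{Cor.With.Sigma} already tells us that no coefficient of a minimal first syzygy of $\MIJ$ lies in $\tau \cdot \frakm^{2d-1}$. Once we know $J \subseteq \tau$, it follows that $J \cdot \frakm^{2d-1} \subseteq \tau \cdot \frakm^{2d-1}$, and the conclusion of the corollary is immediate: if every $f_i$ lay in $J \cdot \frakm^{2d-1}$, we would contradict Corollary \ref{Cor.With.Sigma}.

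So the heart of the proof is the containment $J \subseteq \tau$, and here is where we use the tight closure machinery alluded to in the preceding remark. The multiplier ideal $\tau = \MI{X,\OO_X}_x$ is a \emph{universal test ideal} in the sense of \cite{Sm2} and \cite{Ha2}: after reduction modulo $p \gg 0$, $\tau$ specializes to the test ideal $\tau(\OO_p)$ of the characteristic-$p$ model $\OO_p$ of $\OO$. On the other hand, a well-known theorem of Hochster--Huneke asserts that for any reduced ring essentially of finite type over a field of positive characteristic, the Jacobian ideal is contained in the test ideal. Applied to $\OO_p$, this gives $J_p \subseteq \tau(\OO_p)$ for all $p \gg 0$. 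Since the Jacobian ideal commutes with reduction mod $p$, standard spreading-out arguments promote this to the inclusion $J \subseteq \tau$ in characteristic zero.

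The main obstacle is really just the packaging of this last step, i.e.\ citing the right form of the Hochster--Huneke Jacobian-in-test-ideal theorem in positive characteristic and checking that it descends to characteristic zero via the universal test ideal property. Both ingredients are standard, and once they are in place the corollary reduces, as indicated above, to a one-line deduction from Corollary \ref{Cor.With.Sigma}.
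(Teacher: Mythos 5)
Your proposal is correct and follows essentially the same route as the paper: reduce to Corollary \ref{Cor.With.Sigma} by showing $J \subseteq \tau$, then obtain that inclusion from the Hochster--Huneke theorem that the Jacobian ideal lies in the test ideal in characteristic $p$, combined with the universal test ideal property of $\MI{X,\OO_X}$ from \cite{Sm2} and \cite{Ha2}. The paper's proof is exactly this argument, citing \cite[Thm 3.4]{HH2} and \cite[Thm 3.1]{Sm2}.
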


\begin{proof}
Keeping in mind what was said in the preceding paragraph, it
suffices to show that the Jabobian ideal $J$ is contained in the
multiplier ideal $\tau$. Indeed, by \cite[Thm 3.4]{HH2}, in prime
characteristic, the Jacobian ideal is contained in the test ideal,
which means that in characteristic zero, the Jacobian ideal must be
contained in the multiplier ideal of the unit ideal by \cite[Thm
3.1]{Sm2}.\footnote{Related results comparing multiplier ideals and
Jacobian ideals can be found in \cite[Sec 4]{ELVS}.}
\end{proof}

\begin{remark}\label{rem2}
One can replace the multiplier ideal  $\tau=\MI{X, \OO_X} $ in this
discussion by any ideal $\tau\pr$ with the property that, after
reducing modulo $p$ for $p \gg 0$,  $\tau ^\pr$ is contained in the
{\it parameter test ideal\/} for the corresponding prime
characteristic ring.  The point is that the parameter test ideal
will multiply the tight closure of any ideal $I$ generated by
monomials in a system of parameters back into $I$, so that the
equation
\[
\tau^\pr \frakm^{2d-1} \ \subseteq \ (z_1, \ldots, z_d)^d
\] will hold for such  $\tau^\pr$.
 For example, one could replace $\sigma(X)$ in the statement of Corollary \ref{Cor.With.Sigma}
by a {\it universal parameter test ideal} if one is known to exist.
For example, in  the case where $\OO$ is rationally singular, the
universal parameter test ideal exists and is the unit ideal; this is
essentially the well-known statement that rationally singular rings
correspond, after reduction modulo $p$ for $p \gg 0$, to rings in
which all parameter ideals are tightly closed (see the main theorems
in \cite{Sm1}, and \cite{Ha1} or \cite{MS}.)
\end{remark}

 \begin{example}\label{ex3}
 Using Remark \ref{rem2}, we can generalize Example \ref{ex2} as follows.
 Let $x$ be the vertex of the cone over any rationally singular projective variety $Y$ with respect to any ample invertible sheaf $\LL$. In other words,  the local ring  $\OO$  at $x$ is obtained by localizing the section ring of $Y$ with respect to $\LL$ at its unique homogeneous maximal ideal $\frakm$. Assume that $\OO$ is Cohen-Macaulay and $\QQ$-Gorenstein (it is always normal), and let $d$ be its dimension. Let $a$ be the {\it a-invariant} of $\OO$, which is to say,  let $a$ be  the largest integer $n$ such that
the graded module $\omega_{\OO}$ is non-zero in degree $-n$ (or
alternatively, such that $\omega_X \otimes \LL^{-n}$ has a non-zero
global section).  Then no minimal first syzygy of  $\MIJ$ can vanish
to order $a + 2d$ at the vertex of the cone.

The point is in this case that, after reducing modulo $p \gg 0$, the
parameter test ideal includes all elements of degree greater than
$a$. In particular, $\frakm^{a+1}$ is contained in the parameter
test ideal, and so we can apply Remark \ref{rem2}.

To see that every element of degree greater than the $a$-invariant
is  contained in  the parameter test ideal, recall first that the
parameter test ideal of $\OO$ is the annihilator of the tight
closure of the zero module in
 $H^d_{\{x\}}(\OO)$ \cite[Prop 4.4]{Sm3}.
 On the other hand, for a section ring over a rationally singular variety,
  the tight closure of zero in  $H^d_{\{x\}}(\OO)$ is precisely the submodule of  $H^d_{\{x\}}(\OO)$ of non-negatively graded elements. Since  $H^d_{\{x\}}(\OO)$  vanishes in degree greater than $a$, it follows that every element of degree greater than $a$ annihilates the required tight closure module; see also \cite{HS}. \qed
 \end{example}

 \begin{example} The case of a standard graded algebra gives a user friendly special case of Example \ref{ex3}. Let $R$ be a normal Cohen-Macaulay $\QQ$-Gorenstein $\NN$-graded domain, generated by its degree one elements over its degree zero part $\CC$. Assume also that $R$ has isolated
 non-rational singularities. Then the first minimal syzygies of every multiplier ideal in $R$ have degree less than  $2d + a$, where $d$ is the dimension of $R$ and $a$ is the $a$-invariant of $R$.
(The statement holds even in the non-homogeneous case, where by
degree we mean the degree of the smallest degree component of the
syzygy.)
\end{example}

\begin{example} \label{ex4} In the smooth two-dimensional case, every integrally closed ideal is a multiplier ideal by a theorem of \cite{LW} or \cite{FJ}. As in \cite{LL}, Theorem A easily implies the existence of integrally closed ideals in dimensions $\ge 3$ that are not multiplier ideals also in the singular case.
For example, if $X$  is a normal Cohen-Macaulay $\QQ$-Gorenstein
variety of dimension  at least three, then we can find plenty of
regular sequences $f_1, \dots, f_r$, where $r$ is strictly smaller
than the dimension,  contained in an arbitrarily  high power  some
minimal reduction of $(z_1, \dots, z_d)$, of $\frakm$. For general
such $f_i$, the ideal $I$ they generate is radical and hence
integrally closed. On the other hand, the Koszul syzygies on the
$f_i$ violate Theorem A. If we wish to get an $\frakm$-primary
example, we can work in the graded case and add a large power of
$\frakm$ to $I$ as in \cite[Lemma 2.1]{LL}. (The proof of this lemma
is given in \cite{LL} for  polynomial rings, but it works for any
graded ring.)
\end{example}

Finally, it would be interesting to know the answer to the following
\begin{question}
If $R$ is a two dimensional $\QQ$-Gorenstein rationally singular
ring essentially of finite type over $\CC$, is every integrally
closed ideal a multiplier ideal?
\end{question}
\noi The first point to consider would be whether the conclusion of
Corollary \ref{Admissible.SIng.Cor}
 automatically holds in such rings.

\end{document}